\documentclass[12pt]{article}
\title{A proof of the Borel completeness of torsion free abelian groups}
\author{ Michael C.\
Laskowski  
and Douglas S.\ Ulrich
 \thanks{Both authors partially supported
by NSF grant DMS-1855789.}
\\
Department of Mathematics\\University of Maryland
}

\usepackage{amssymb}
\usepackage{enumitem}
\usepackage{mathtools}
\usepackage{geometry,amssymb,amsthm,amsmath,
graphics}
\usepackage{times}
\usepackage{amsfonts}
\usepackage{amssymb}
\usepackage{amscd}
\usepackage{url}

\def\includeE#1{{\lhook\kern-3.5pt\joinrel\smash{
    \mathop{\longrightarrow}\limits^{#1}}}}

\def\efor/{Example~\ref{E4}}

\def\BL/{Baldwin--Lachlan}
\def\Bu/{Buechler}
\def\Hr/{Hrushovski}
\def\lm/{locally modular}
\def\wm/{weakly minimal}
\def\nm/{non--modular}
\def\ss/{superstable}
\def\ud/{unidimensional}
\def\sm/{strongly minimal}

\def\hbar{\overline{h}}

\def\tr/{trivial}
\def\nt/{non--trivial}
\def\st/{strong type}

\def\phi{\varphi}

\def\F{{\cal F}}
\def\FF{{\bf F}}

\def\Z{{\mathbb Z}}

\def\Fa0{{\FF^a_{\aleph_0}}}

\def\endproof{\medskip}
\def\<{\langle}
\def\>{\rangle}

\def\FF{{\mathbb F}}
\newcommand\myrestriction{\mathord\restriction}
\def\mr#1{\myrestriction_{#1}}

\newtheorem{Theorem}{Theorem}[section]
\newtheorem{Proposition}[Theorem]{Proposition}
\newtheorem{Definition}[Theorem]{Definition}

\newtheorem{Remark}[Theorem]{Remark}

\begin{document}

\date{\today}

\maketitle

\section{Introduction}

In February 2021, Paolini and Shelah \cite{PS} announced a proof that the theory TFAB of torsion free abelian groups is Borel complete.  Since then, it has been observed and
corroborated by them that there is a gap in both the February 2021 and June 2021 arXiv submissions.  We understand that Paolini and Shelah are actively working on a revised version of their proof.
Although their work precedes ours,  it seems like the proof of this fact presented in this article is sufficiently different to be of interest in its own right.  
 
 In some sense the proof presented here, in contrast to the approach of Paolini-Shelah, is a continuation of an idea of Shelah and the second author.  
 Although differently named there, in \cite{ShU} Shelah and Ulrich explore the class of {\em tagged abelian groups}\footnote{This notion, with still a different name, appears in work of G\"obel-Shelah~\cite{GS}
 and is  related to Hjorth's notion of an ``eplag'' in \cite{Hjorth}.}
 and prove that this class of structures is Borel equivalent to TFAB.

\begin{Definition} \label{tagged} {\em  Let $L=\{+,0\}\cup\{U_n:n\in\omega\}$.  A {\em tagged abelian group} is an $L$-structure $M$ so that $(M,+,0)$ is an abelian group and
every $U_n^M$ is a subgroup of $M$.
}
\end{Definition}

Much of the power of this notion is its malleability.  We record one manifestation of this flexibility for later reference.

\begin{Remark}  \label{shuffle}  {\em At its heart, a tagged abelian group is simply an abelian group $(M,+,0)$, together with countably many subgroups named by unary predicates.
Thus, if an $L$-structure $M$ is a tagged abelian group, then any expansion $M^*$ of $M$ formed by naming one (or even countably many) new subgroups can  be construed 
as a tagged abelian group by reassinging the names of the predicate symbols.
}
\end{Remark}

A consequence of  Theorem~13 of \cite{ShU} is that there is a Borel reduction from the class of (countable) tagged abelian groups to TFAB, the class of torsion-free abelian groups.
A road map of the proof of this consequence is given in Section~\ref{old}.   In light of that result, it suffices to show that tagged abelian groups are Borel complete, which we prove in Section~\ref{new}.

\section{Summarizing (tagged abelian groups)$\le_B$ TFAB}   \label{old}

As noted above, the following appears as Theorem~13 of \cite{ShU}.

\begin{Theorem}[Shelah-Ulrich]  The classes of (countable) Tagged Abelian Groups, Abelian Groups, and Torsion-free Abelian Groups are Borel equivalent.
In particular, if Tagged Abelian Groups is Borel complete, then so is the class of Torsion-free Abelian Groups.
\end{Theorem}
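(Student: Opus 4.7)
The plan is to exhibit the chain of Borel reductions
\[
\text{TFAB} \leq_B \text{AG} \leq_B \text{TAG} \leq_B \text{TFAB},
\]
from which Borel equivalence follows. The ``in particular'' clause is then immediate: if TAG is Borel complete and TAG $\leq_B$ TFAB, then every Borel isomorphism relation reduces, through TAG, to TFAB.

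The first two reductions are essentially trivial. The class TFAB is Borel-defined inside AG (being torsion-free is a $\Pi^0_2$ condition), so the identity map realizes TFAB $\leq_B$ AG. And sending $(G,+,0) \in \text{AG}$ to the tagged abelian group with every $U_n^M := \{0\}$ gives a Borel reduction AG $\leq_B$ TAG, since two such trivially-tagged groups are isomorphic as tagged groups iff they are isomorphic as abelian groups.

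The substantive step is TAG $\leq_B$ TFAB. Fix an injection $n \mapsto p_n$ into the primes and reserve one further distinguished prime $p_*$. Given a tagged abelian group $M = (M,+,0,(U_n^M)_n)$, I would produce $T(M)$ in two stages. First, apply a preliminary torsion-killing construction $M \mapsto M^\sharp$: choose a free presentation $M = F/K$, set $M^\sharp := F \oplus K$ (which is torsion-free as a subgroup of a free abelian group), and translate the predicates $U_n^M$ into distinguished subgroups $\widetilde U_n \leq M^\sharp$ encoding the same data, so that $M$ can be recovered from $(M^\sharp, (\widetilde U_n)_n)$ up to isomorphism. Second, form the pushout
\[
T(M) \;:=\; \bigl(M^\sharp \oplus \bigoplus_{n} \widetilde U_n \otimes_{\Z} \Z[p_n^{-1}]\bigr) \big/ N,
\]
where $N$ is generated by the identifications $u \sim u \otimes 1$ for $u \in \widetilde U_n$ in the $n$th summand, so that each element of $\widetilde U_n$ becomes $p_n$-divisible in $T(M)$. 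Finally, use $p_*$-divisibility to tag the elements of the copy of $M^\sharp$ inside $T(M)$, so that $M^\sharp$ is recoverable abstractly inside $T(M)$. The whole assignment is visibly Borel, and $T(M)$ is torsion-free by construction.

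The key claim is that $M \mapsto T(M)$ is full and faithful on isomorphism classes. In one direction, any isomorphism $M \cong M'$ lifts canonically through both stages to $T(M) \cong T(M')$. In the other direction, from $T(M)$ alone one recovers $M^\sharp$ as the set of $p_*$-tagged elements, each $\widetilde U_n$ as $M^\sharp$ intersected with the $p_n$-divisible part of $T(M)$, and hence $M$. The main obstacle I expect is verifying that no spurious $p_n$-divisibility is introduced, i.e.\ that an element of $M^\sharp \setminus \widetilde U_n$ does not accidentally become $p_n$-divisible in $T(M)$; this requires a careful analysis of the relations in $N$, using the independence of the distinct primes $p_n$ and the fact that the summands $\widetilde U_n \otimes_\Z \Z[p_n^{-1}]$ interact only through the shared copy of $M^\sharp$.
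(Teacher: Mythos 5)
Your overall architecture --- a chain of reductions $\text{TFAB} \le_B \text{AG} \le_B \text{TAG} \le_B \text{TFAB}$, with the last reduction being the substantive one --- matches the structure of the Shelah--Ulrich argument that the paper cites, and your two easy reductions are correct. The problem is in the step $\text{TAG} \le_B \text{TFAB}$: the obstacle you flag at the end (``verifying that no spurious $p_n$-divisibility is introduced'') is not a detail to be checked but a genuine failure of the construction as you set it up, and the missing ingredient is precisely the intermediate step that the paper's outline includes and yours omits.

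Concretely, the Shelah--Ulrich route factors the reduction as
$\Omega_{\omega,\emptyset}^- \le_B \Omega_{\omega,\emptyset} \le_B \Omega_{\omega,\emptyset}^p \le_B \text{TFAB}$,
where $\Omega_{\omega,\emptyset}$ is tagged abelian groups on a free universe and $\Omega_{\omega,\emptyset}^p$ additionally requires every tagged subgroup $U_n$ to be \emph{pure}. Your proposal jumps directly from a tagged free group $M^\sharp$ to the localization $T(M)$, with no mechanism to arrange purity of the $\widetilde U_n$ first. Without purity, the recovery of $\widetilde U_n$ from $p_n$-divisibility data genuinely fails: if $v\in M^\sharp\setminus\widetilde U_n$ satisfies $p_n v = u$ for some $u\in\widetilde U_n$, then in the torsion-free pushout $T(M)$ the relation $p_n v = u = p_n\bigl(u\otimes p_n^{-1}\bigr)$ forces $v = u\otimes p_n^{-1}$, so $v$ becomes $p_n^\infty$-divisible in $T(M)$ even though $v\notin\widetilde U_n$. (Subgroups of free abelian groups are not pure in general --- already $2\Z\subseteq\Z$ is not pure --- so this situation is unavoidable.) Thus ``$M^\sharp$ intersected with the $p_n$-divisible part of $T(M)$'' strictly overshoots $\widetilde U_n$, and the claimed faithfulness on isomorphism classes breaks down. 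You would need to first establish the analogue of ShU Lemma~17, replacing arbitrary tagged subgroups by pure ones while preserving the isomorphism relation, and only then apply the localization trick.

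Two smaller points. First, $M^\sharp := F\oplus K$ is more baroque than needed --- taking $M^\sharp := F$ with $K$ tagged already recovers $M=F/K$ --- but that is cosmetic. Second, ``choose a free presentation'' must be made canonical (e.g.\ $F$ free on the underlying set of $M$ with $K$ the kernel of the evaluation map); otherwise the forward direction, that isomorphisms of $M$ lift to isomorphisms of the tagged presentation, is not automatic. This is standard but should be said.
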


\begin{proof}  As it is what we need, we 
 outline the steps in showing that tagged abelian groups are Borel reducible to
TFAB.
To  ease readability, we record three classes of objects that were defined in \cite{ShU}.  Throughout the whole of this argument, we can take
 ${\cal J}$, the set of distinguished homomorphisms, to be empty.   
\begin{itemize}
\item  $\Omega_{\omega,\emptyset}^-$ there is precisely the class of countable, tagged abelian groups.
\item  $\Omega_{\omega,\emptyset}$ is the class of tagged abelian groups with universe $\bigoplus_\omega \Z$.
\item  $\Omega_{\omega,\emptyset}^p$ is the class of tagged abelian groups with universe $\bigoplus_\omega \Z$, where we further require that
each of the distinguished subgroups $U_n$ is a pure subgroup of $\bigoplus_\omega \Z$.
\end{itemize}

 Theorem~22 of  \cite{ShU} states  that $\Omega_{\omega,\emptyset}^-\le_B \Omega_{\omega,\emptyset}$.
Continuing, from Lemma~17 there, $\Omega_{\omega,\emptyset}\le_B \Omega_{\omega,\emptyset}^p$,
and $\Omega_{\omega,\emptyset}^p\le_B TFAB$ by
 Lemma~19 there.  [Remark:  Lemma~19 can also be established by more ad hoc methods that do not involve tensor products.  An alternate argument is available upon request.]
 \qed
\end{proof}

\section{Tagged abelian groups are Borel complete.}  \label{new}

\begin{Theorem} \label{tagBC} 
 The class of countable, tagged abelian groups is Borel complete.  In fact, we show that the class of tagged abelian groups whose universe is $\bigoplus_\omega\FF_2$ is Borel complete.
\end{Theorem}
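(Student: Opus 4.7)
The plan is to exhibit a Borel reduction from a standard Borel-complete class into the class of tagged abelian groups whose universe is $\bigoplus_\omega \FF_2$. The natural source is the class of countable graphs on $\omega$; the key obstruction is that the predicates $U_n$ carry rigid indices, so the construction must encode the graph structure through isomorphism-invariant subspace configurations while still carrying enough data to reconstruct $G$ up to isomorphism.

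Given $G = (\omega, E)$, I would build $M_G$ as a direct sum $A \oplus B$ in which $A$ plays the role of a ``vertex arena'' with distinguished basis $\{a_v : v \in \omega\}$ and $B$ a ``witness arena'' with basis $\{b_e : e \in E\}$, fitted into $\bigoplus_\omega \FF_2$ via a fixed bijection of basis sets. The subgroups are arranged in layers: $U_0 = A$, $U_1 = B$, and an incidence subgroup $U_2 = \langle a_v + a_w + b_{\{v,w\}} : \{v,w\} \in E\rangle$, together with countably many further ``enrichment'' subgroups introduced via Remark~\ref{shuffle}. The enrichments must serve two purposes simultaneously: they must depend only on the isomorphism type of $G$ (so graph isomorphisms lift to tagged-group isomorphisms), and they must rigidify the action of any tagged-group isomorphism so as to force it to come from a permutation of the vertex basis.

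The forward direction is built into the construction: a graph isomorphism $\sigma: G \to H$ induces a linear map by permuting the $a_v$'s along $\sigma$ and the $b_e$'s along the induced edge action; this map preserves $U_0, U_1, U_2$ by construction and preserves the enrichment subgroups by their iso-invariance. For the reverse direction, preservation of $U_0$ and $U_1$ splits any tagged-group isomorphism $\phi: M_G \to M_H$ as $\phi = \phi_V \oplus \phi_E$; preservation of $U_2$ couples these two components via the incidence data; and preservation of the enrichments is meant to rigidify $\phi_V$ to a permutation of $\{a_v\}$, whose induced map $\omega \to \omega$ is the sought graph isomorphism.

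The main obstacle is the design of the enrichment subgroups, and this is where most of the real work lives. A small test case shows that $U_0, U_1, U_2$ alone are far too weak: for instance, one can explicitly construct a tagged-group isomorphism between $M_{P_4}$ and $M_{K_{1,3}}$ despite these graphs being non-isomorphic, since $\phi_V$ is an arbitrary linear automorphism of an $\FF_2$-space rather than a basis permutation. Adding subgroups that only record local data (e.g., $r$-ball isomorphism types at each vertex) can still collapse non-isomorphic graphs that share a common local profile, whereas naming subgroups tied directly to vertex labels would break the forward direction. Threading this needle requires exploiting the full flexibility of the $\FF_2$ setting---where subgroups are subspaces, giving maximal combinatorial control---together with Remark~\ref{shuffle}, constructing enough ``long-range'' label-free subgroups to pin down $\phi_V$ while remaining stable under the automorphisms of $G$. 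Finding such an enrichment is the combinatorial heart of the proof.
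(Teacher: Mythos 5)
You correctly identify the core obstacle — forcing tagged-group isomorphisms to act by basis permutations rather than arbitrary $\FF_2$-linear maps — but you leave it unresolved and explicitly flag it as the unsolved heart of the argument, so this is a genuine gap. Moreover, the direction you sketch for closing it (iso-invariant enrichment subgroups recording local or long-range graph data) is unlikely to succeed: such data is exactly what collapses $P_4$ and $K_{1,3}$ in your test case, and it is not clear how to escape that phenomenon while keeping the enrichments independent of the vertex labelling.

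The paper's resolution is structurally different from what you propose. It does not vary the underlying group with $G$ at all: a single ``engine'' $N$ is fixed once and for all (Proposition~\ref{lift}), and a graph $G$ is encoded via exactly one extra subgroup $U_+(G)$, so $N(G) = (N, U_+(G))$ using Remark~\ref{shuffle}. The rigidification is algebraic rather than combinatorial: the distinguished basis $X_0$ of (one summand of) $N$ is arranged to equal $V_0 \setminus \bigcup_n V_{0,n}^N$, i.e.\ the complement of the union of the tagged subgroups, which is automatically $Aut(N)$-invariant; thus every automorphism permutes $X_0$, with no need for enrichments. A Fra\"iss\'e construction (modifying a device from \cite{Herrings}, Proposition~\ref{herring}) then yields an invariant equivalence relation $E_0$ on $X_0$ with infinitely many classes such that \emph{every} permutation of $X_0/E_0$ lifts to an automorphism of $N$; vertices of $G$ are taken to be $E_0$-classes, not individual basis vectors, which is what makes full liftability achievable. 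Finally, a second copy of this configuration, coupled by two extra subgroups $W_0, W_1$, yields an $Aut(N)$-invariant bijection $k^*:[X_0]^2 \to X_1/E_1$ for naming edges, and $U_+(G)$ is simply the span of those $z \in X_1$ whose $E_1$-class lies over an edge of $G$. In short: the group is fixed and only one subgroup carries the graph data; basis-invariance comes for free from the ``complement of the $U_n$'s'' trick; and the lifting property from the Fra\"iss\'e limit replaces the enrichment machinery you were hoping to build.
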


  Our strategy for proving Theorem~\ref{tagBC}  to construct a single,
 countable, tagged abelian group $N$ satisfying the conditions of Proposition~\ref{lift}.
Then, using the fact mentioned in Remark~\ref{shuffle}, we will give a Borel mapping between from the space of countable graphs $G=(\omega,R)$ 
to expansions $N(R)=(N,U_*(R))$  of $N$ by a subgroup preserving isomorphism in both directions.  

In everything that follows, 
 we will be looking at tagged abelian groups where the universe  $M$ is an $\FF_2$-vector space.  Note that in such cases, each $U_n^M$ is automatically an
$\FF_2$-subspace of $M$.
Call a subset $Y\subseteq M^k$  {\em invariant} if every $\sigma\in Aut(M)$ fixes $Y$ setwise.  Clearly, if $Y$ is 0-definable by a formula in $L_{\omega_1,\omega}$,
then $Y$ is invariant.  As a simple example, for any tagged abelian group $M$, the set $X^M:=M\setminus\bigcup\{U_n^M:n\in\omega\}$ is invariant.

\smallskip

Throughout this note, for any set $Z$, $Sym(Z)$ denotes the set of permutations of $Z$.  We isolate a central notion.

\begin{Definition}  {\em  
Suppose $M$ is any structure, $X\subseteq M$ any subset, and $E\subseteq X^2$ any equivalence relation on $X$.  
We say a particular permutation
{\em $h\in Sym(X/E)$ lifts to an automorphism of $M$} if there is $\sigma\in Aut(M)$ fixing $X$ setwise and satisfying $\sigma(x)/E=h(x/E)$ for every $x\in X$.
}
\end{Definition}

\begin{Proposition}  \label{herring} There is a tagged abelian group $M$ with universe $\bigoplus_\omega\FF_2$ such that:
\begin{enumerate}
\item  $X:=M\setminus\bigcup\{U_n^M:n\in\omega\}$ is a basis for the $\FF_2$-vector space $(M,+,0)$;
\item  There is an equivalence relation $E\subseteq X^2$ with infinitely many classes such that every $h\in Sym(X/E)$ lifts to some $\sigma\in Aut(M)$.
\end{enumerate}
\end{Proposition}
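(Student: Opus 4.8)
The plan is to build $M$ by hand as a direct sum $M = \bigoplus_{i \in I} V_i$ of countably many copies of a single ``gadget'' $F_2$-vector space, indexed by a countable set $I$, and to let $E$ essentially be the partition of the basis $X$ into the blocks coming from the individual summands $V_i$. The guiding principle is that any permutation $h$ of the index set $I$ should induce a permutation of the summands, hence an automorphism of $M$; the only work is to name the subgroups $U_n^M$ so that (a) $X := M \setminus \bigcup_n U_n^M$ is exactly a prescribed basis, and (b) the subgroups are symmetric enough under permutations of $I$ that these induced permutations actually fix each $U_n^M$ setwise.

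Here is how I would carry this out. First I would fix a small finite-dimensional ``template'' structure: an $F_2$-vector space $W$ with a distinguished basis $\{w_1, \dots, w_d\}$ and a finite list of subspaces $W_1, \dots, W_m$ of $W$ chosen so that $W \setminus \bigcup_j W_j$ is exactly the basis $\{w_1, \dots, w_d\}$ (for instance, take $d$ large and let the $W_j$ range over all the coordinate hyperplanes, or a cleverly chosen finite family, so that a vector survives iff it is one of the basis vectors — this is a finite combinatorial check). Next, let $I = \omega$, set $V_i \cong W$ for each $i$, and put $M = \bigoplus_{i \in \omega} V_i$ with universe $\bigoplus_\omega F_2$. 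For the tags, I would use \emph{two} families of subgroups: the ``internal'' subgroups $\bigoplus_i W_j^{(i)}$ (the $j$th template-subspace summed over all $i$), which cut out within each block exactly the complement of that block's basis vectors, together with subgroups that kill off all the \emph{mixed} vectors — i.e., subgroups whose union contains every element of $M$ with support of size $\geq 2$. A natural choice for the latter: for each pair $i < i'$ the subgroup $\bigoplus_{k \neq i} V_k$ and $\bigoplus_{k \neq i'} V_k$, no — rather, I would take for each finite $S \subseteq \omega$ the subgroup $\bigoplus_{i \in S} V_i$; since there are only countably many finite $S$, this is allowed by Definition~\ref{tagged}, and a vector with support $T$ lies in $\bigoplus_{i \in S} V_i$ for some such $S$ whenever $|T| \neq 1$ together with the right extra book-keeping, while a vector with support a single $\{i\}$ and which is \emph{not} one of the $d$ basis vectors of $V_i$ is caught by the internal tags. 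Then one checks $X = M \setminus \bigcup_n U_n^M = \{\,w_\ell^{(i)} : i \in \omega,\ 1 \le \ell \le d\,\}$, which is a basis, giving clause (1).

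For clause (2), define $E$ on $X$ by $w_\ell^{(i)} \mathrel{E} w_{\ell'}^{(i')} \iff i = i'$; this has infinitely many classes, one per index $i \in \omega$. Given $h \in Sym(X/E) \cong Sym(\omega)$, let $\pi \in Sym(\omega)$ be the corresponding permutation of indices and define $\sigma \colon M \to M$ to be the linear map sending the block $V_i$ isomorphically onto $V_{\pi(i)}$ via $w_\ell^{(i)} \mapsto w_\ell^{(\pi(i))}$. This $\sigma$ is visibly an $F_2$-linear automorphism of $(M,+,0)$ permuting the basis $X$, it realizes $h$ on $X/E$, and it fixes each tag setwise: the internal tags $\bigoplus_i W_j^{(i)}$ are permuted within themselves because $\sigma$ just permutes the summands, and each $\bigoplus_{i \in S} V_i$ is mapped to $\bigoplus_{i \in \pi(S)} V_i$, which is again one of our named subgroups — \emph{but here is the subtlety}: for $\sigma$ to fix the tag $U_n^M$ itself (not merely map it to another tag), I need the family of $S$'s used to be closed under $\pi$, which fails for an arbitrary finite $S$. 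The fix is to not name the $\bigoplus_{i \in S} V_i$ individually, but to name instead the single subgroup $M_{\mathrm{mix}}$ generated by all elements of support $\geq 2$ (this is a subgroup, it is invariant under every permutation of the blocks, and its complement together with the internal tags carves out exactly $X$) — or, if one genuine subgroup does not suffice to separate points correctly, to name a $\pi$-invariant countable family such as $\{\bigoplus_{i \in S} V_i : S \subseteq \omega \text{ cofinite}\}$, which \emph{is} closed under every permutation of $\omega$. I expect this point — arranging the tags so that each individual $U_n^M$ is literally fixed setwise by every block-permutation, while still having $\bigcup_n U_n^M$ miss exactly the intended basis — to be the main obstacle, and getting the finite template $W$ and the family of tags to mesh is where the real (though elementary) combinatorics lives.
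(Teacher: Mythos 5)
Your strategy of realizing every $h\in Sym(X/E)$ by a permutation of the direct summands forces every tag $U_n^M$ to be setwise invariant under the whole group of block permutations, and this is where the construction breaks: the subgroups of $\bigoplus_{i\in\omega}V_i$ with that invariance property are too coarse to cover exactly $M\setminus X$. Concretely (with $d\ge 2$), consider the mixed vector $u=w_1^{(1)}+(w_1+w_2)^{(2)}$. Any subgroup $U$ containing $u$ and closed under all block permutations contains $u+\sigma_{(1\,3)}(u)=w_1^{(1)}+w_1^{(3)}$, hence (applying $\sigma_{(3\,2)}$) contains $w_1^{(1)}+w_1^{(2)}$, and therefore contains $u+\bigl(w_1^{(1)}+w_1^{(2)}\bigr)=w_2^{(2)}$ --- a basis vector. (For $d=1$ the same happens with $e_1+e_2+e_3$.) So $u$ cannot be placed in any tag without that tag swallowing a basis vector, while leaving $u$ untagged puts it into $X$ and destroys linear independence; either way clause (1) fails. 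Your two explicit fixes also fail for more elementary reasons: $M_{\mathrm{mix}}$ contains $\bigl(w_1^{(1)}+w_1^{(2)}+w_2^{(2)}\bigr)+\bigl(w_1^{(1)}+w_1^{(2)}\bigr)=w_2^{(2)}$ (both summands have support of size $2$), and $\bigoplus_{i\in S}V_i$ for $S$ cofinite contains every basis vector $w_\ell^{(i)}$ with $i\in S$ outright. This is not a repairable bookkeeping issue but an obstruction to the whole ``homogeneous direct sum plus block permutations'' scheme.

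The paper sidesteps this by not asking the lifts to be induced by permutations of an index set. It takes $M$ to be (the reduct of) a Fra\"iss\'e limit of finite tagged $\FF_2$-spaces in which $X^A$ is a basis and $E$ is an \emph{arbitrary} equivalence relation on $X^A$, completely decoupled from the group structure; disjoint amalgamation is arranged by giving each stray non-basis vector $a$ of the amalgam its own tag $\{0,a\}$, and a given $h\in Sym(X/E)$ is lifted by a back-and-forth system $\F_h$ rather than by a globally defined linear map. The resulting automorphisms shuffle basis vectors between $E$-classes freely while fixing many non-basis vectors pointwise --- behavior that no block-permutation construction can reproduce. If you want a concrete (non-Fra\"iss\'e) model, you would need to mimic that flexibility, e.g.\ by making the tags covering $M\setminus X$ depend only on data that every intended lift preserves, which your block decomposition does not provide.
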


\begin{proof}  We prove Proposition~\ref{herring} in two parts.  For the first, we exhibit a class $K_0$ of (finite) tagged abelian groups $A$, each with a distinguished subset $X^{A}$ satisfying disjoint amalgamation.
The $L$-reduct of the $K_0$-Fraisse limit would satisfy (1), but to define $E$ satisfying (2), we   modify a technique from \cite{Herrings}.



To begin, let $L_X:=L\cup\{X\}$, where an additional $X$ is a unary predicate, but we do not interpret it as a subgroup.
Let 
$K_0$ be the class of (finite)  $L_X$ structures $A$ such that

\begin{itemize}
\item  The $L$-reduct of $A$ is a tagged abelian group with $(A,+,0)\cong \FF_2^k$ for some integer $k$;
\item  $U_n^A=\{0\}$ for all but finitely many $n\in\omega$; 
\item  $X^A:=A\setminus\bigcup\{U_n^A:n\in\omega\}$; and
\item  $X^A$ is a basis for the $\FF_2$-vector space $(A,+,0)$.
\end{itemize}
To verify that $K_0$ satisfies disjoint amalgamation,   suppose $A,B,C\in K_0$ with $A$ an $L$-substructure of both $B$ and $C$ with $B\cap C=A$.
Let $(D,+,0)$ be any  $\FF_2$-vector space with basis $X^D:=X^B\cup X^C$ for which $B$ and $C$ are subspaces.
To define the predicates $U_n^D$, we split into cases.  Call $n\in\omega$ {\em active} if either $U_n^B$ or $U_n^C\neq \{0\}$.
For each active $n$, put $U_n^D:=\<U_n^B\cup U_n^C\>$, but note that cofinitely many $n$ are inactive. 
 
Clearly $X^D\subseteq D\setminus\{U_n^D:n$ active$\}$, but equality need not hold.
For each of the (finitely many) $a$ in the difference, attach an inactive index $n(a)$ and put $U_{n(a)}^D:=\{0,a\}$.  Finally, complete the description of $D$ by positing $U_n^D:=\{0\}$
for the cofinite set of $n$'s that are neither active, nor of the form $n(a)$ for any $a$ in the difference.  It is easily checked that $D\in K_0$.

\medskip
Let $M$ denote the $L$-reduct of the $K_0$-Fraisse limit.  It is easily checked that
$M$ is a tagged abelian group, $X^M$ is a basis, $M\setminus \{U_n^M:n\in\omega\}=X^M$ and because $K_0$ has elements $A$ with $X^A$ arbitrarily large (but finite), $X^M$ is infinite.   

\bigskip


For the second part, we add a binary relation symbol $E$ to the language, i.e., $L_{XE}:=L\cup\{X,E\}$
and define a class $K$ of finite $L_{XE}$-structures $A=(A_0,E^A)$ satisfying
\begin{itemize}
\item  $A_0\in K_0$; and
\item  $E^{A}\subseteq X_{A_{0}}^2$ is an equivalence relation.
\end{itemize}

We first note that $K$ satisfies disjoint amalgamation in a very strong way.  The following is immediate from the definition of the class $K$.

\begin{quotation}

\noindent   For all $A,B,C\in K$ with $A\subseteq B$, $A\subseteq C$, if $D_0\in K_0$ is any amalgam of $B_0$ and $C_0$ with $X^{D_0}=X^{B_0}\cup X^{C_0}$, then for {\bf any} equivalence
relation $E^*$ on $X^{D_0}$ extending $E^B\cup E^C$, $D:=(D_0,E^*)\in K$ and $B\subseteq D$, $C\subseteq D)$.
\end{quotation}

Let $M^*$ be the Fraisse limit with respect to the class $K$.  To ease notation, let $X:=X^{M^*}$ and $E:=E^{M^*}$.  
Then as above, the $L$-reduct $M$ of $M^*$ is a tagged abelian group with $(M,+,0)$ isomorphic to $\bigoplus_\omega \FF_2$ with $X$ a basis and $X=M\setminus\bigcup\{U_n^{M^*}:n\in\omega\}$.
Additionally $E$ is an equivalence relation on $X$.  

\medskip
\noindent{\bf Claim 1.}  $X/E$ is infinite.
\medskip

\begin{proof}  For each $n$, choose $A_0\in K_0$ with $|X^{A_0}|\ge n$ and let $E^{{\rm tr}}=\{(x,x):x\in X^{A_0}$ denote the trivial equivalence relation.
Then $A:=(A_0,E^{{\rm tr}})\in K$ and $|X^{A_0}/E^{{\rm tr}}|=|X^{A_0}|\ge n$.  As the $L_{XE}$-structure $A$ embeds into $M^*$, $|X/E|\ge n$.
As $n$ is arbitrary, $X/E$ is infinite.
\end{proof}

\medskip
\noindent{\bf Claim 2.}  Every $h\in Sym(X/E)$ lifts to an $L_{XE}$-automorphism of $M^*$.
\medskip

\begin{proof}  Fix $h\in Sym(X/E)$ and let $\F_h$ denote the set of $L_{XE}$-isomorphisms $f:A\rightarrow B$, where $A,B\in K$ and $A,B\subseteq M^*$ satisfying
\begin{itemize}
\item  For all $x\in X^A$, $f(x)/E=h(x/E)$.
\end{itemize}

It suffices to prove that $\F_h$ is a back-and-forth system.  That $\F_h\neq\emptyset$ is easy (take $A=B$ to have universe $\{0\}$), so fix any
$f:A\rightarrow B$ and any $A'\in K$ with $A\subseteq A'\subseteq M^*$.  It suffices to show there is an extension $f':A'\rightarrow B'$ of $f$ with $f'\in \F_h$.
As a first step toward building $f'$, choose an abstract $L_{XE}$-isomorphism $g:A'\rightarrow C$ extending $f$ with $C\cap M^*=B$.
Next, to make the $E$-classes align, choose a finite set $Y$, $X^B\subseteq Y\subseteq X$ such that for every $x\in X^{A'}$, there is $y\in Y$ with $y\in h(x/E)$.
Let $F:=(F_0,E^F)\subseteq M^*$ be the smallest $L_{XE}$-substructure containing $Y$.  In particular, $X^F=Y$ and $E^F$ is simply the equivalence relation on $Y$ induced by $E^{M^*}$.
The structure $F$ will not appear in the image of the $f'\in\F_h$ we build.  Rather, it is used to maintain control of the $E$-classes of $f'(x)$ for $x\in X^{A'}$.

Now $F\in K$ and $B\subseteq F$.  Since $M^*\cap C=B$, we have $F\cap C=B$.  We construct $D=(D_0,E^D)\in K$ with $F\subseteq D$ and $C\subseteq D$ as follows.
As $B_0,F_0,C_0\in K_0$, by the disjoint amalgamation for $K_0$ described above, let $D_0\in K_0$ be such that $F_0\subseteq D_0$, $C_0\subseteq D_0$ and $X^{D_0}=X^{F_0}\cup X^{C_0}$;
and let $E^D$ be the (unique!) equivalence relation on $X^{D_0}$ extending $E^{F}\cup E_C$ and (recalling $X^{F_0}=Y$)
\begin{itemize}  
\item  For all $x\in X^{A'}$, $y\in Y$ $E^D(g(x),y)\Leftrightarrow h(x)/E=y/E$.
\end{itemize}
As  $F\subseteq D$ and $M^*$ is $K$-homogeneous, choose an $L_{XE}$-embedding $k:D\rightarrow M^*$ fixing $F$ pointwise.

Finally, let $f':A'\rightarrow B'$ be the composition $k\circ g$, where $B'$ is the image of $k\mr{C}$.
Clearly, $f'$ is an $L_{XE}$-isomorphism extending $f$ and $B'\in K$, $B'\subseteq M^*$.  It remains to check that $f'(x)/E=h(x/E)$ for every $x\in X^{A'}$.
To see this, fix any $x\in X^{A'}$.  By our choice of $Y$, choose $y\in Y=X^F$ with $y/E=h(x/E)$.
By the displayed equation, $E^D(g(x),y)$ holds.  Since $k$ is an embedding fixing $F$ pointwise, this implies $E(k\circ g(x),y)$ (recalling $E=E^{M^*}$).
Thus, $f'(x)/E=y/E=h(x/E)$, completing the proof of Claim 2 and hence of the Proposition.
\qed
\end{proof}
\end{proof}

Using the data described in Proposition~\ref{herring}  we construct our `engine' $N$ for coding graphs.

\begin{Proposition}  \label{lift} There is a tagged abelian group $N$ with universe $\bigoplus_\omega\FF_2\oplus\bigoplus_\omega \FF_2$ and $Aut(N)$-invariant sets $X_0,X_1\subseteq N$
and invariant $E_0\subseteq X_0^2$, $E_1\subseteq X_1^2$ such that 
\begin{enumerate}
\item  $X_0\cup X_1$ is linearly independent;
\item  each $E_i$ is an equivalence relation on $X_i$ with infinitely many classes;
\item  every $h\in Sym(X_0/E_0)$ lifts to an automorphism $\sigma\in Aut(N)$; and
\item  There exists an $Aut(N)$-invariant bijection 
$k^*:[X_0]^2\rightarrow X_1/E_1$
i.e., for all $\{x,y\}\in [X_0]^2$, for all $z\in X_1$, and for all $\sigma\in Aut(N)$, $k^*(x,y)=z/E_1$ if and only if $k^*(\sigma(x),\sigma(y))=\sigma(z)/E_1$.
\end{enumerate}
\end{Proposition}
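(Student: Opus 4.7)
The plan is to construct $N$ as the $L$-reduct of the Fraisse limit $N^*$ of a class $\mathcal{K}$ enriching the class $K$ of Proposition~\ref{herring}. Work in $L^+ := L \cup \{X_0, X_1, E_0, E_1, R\}$ with $X_0, X_1$ unary, $E_0, E_1$ binary, and $R$ ternary. Let $\mathcal{K}$ be the class of finite $L^+$-structures $A$ whose $L$-reduct lies in the class $K_0$ from Proposition~\ref{herring}, with $X^A := A \setminus \bigcup_n U_n^A$ partitioned as $X_0^A \sqcup X_1^A$ and forming a basis of $(A,+,0)$; each $E_i^A$ is an equivalence relation on $X_i^A$; and $R^A$ is symmetric in its first two arguments and induces a partial injection $[X_0^A]^2 \to X_1^A/E_1^A$. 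Crucially, three designated subgroups encode this extra relational structure: we require $U_1^A = \langle\{x+y : x E_0^A y,\ x \ne y\}\rangle$, $U_2^A = \langle\{z+z' : z E_1^A z',\ z \ne z'\}\rangle$, and $U_0^A = \langle\{x+y+z : R^A(x,y,z)\}\rangle$. The remaining $U_n^A$'s are used, just as in Proposition~\ref{herring}, to guarantee $X^A = X_0^A \cup X_1^A$.

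Disjoint amalgamation for $\mathcal{K}$ is carried out by first applying the $K_0$-amalgamation to the $L$-reduct, then taking $E_i^D$ to be the transitive closure of $E_i^B \cup E_i^C$, and finally enriching $R$ by introducing one fresh $X_1^D$-element $z_{x,y}$ (in its own $E_1^D$-class) with $R^D(x,y,z_{x,y})$ for each ``cross pair'' $\{x,y\}$ with $x \in X_0^B \setminus X_0^A$ and $y \in X_0^C \setminus X_0^A$. The distinguished subgroups $U_0^D, U_1^D, U_2^D$ are then read off by the coherence conditions. To confirm $D \in \mathcal{K}$ the key point is that these coherences are preserved. For $U_1$ this is the classical edge-space calculation: viewing $E_0^D$-equivalent pairs as edges of a graph on $X_0^D$, a sum of ``edge elements'' equals $x + y$ iff the vertex-degree pattern has odd degree exactly at $x, y$, which forces $x$ and $y$ to lie in the same connected component, equivalently (since each $E_0^D$-class is a clique in this graph) in the same $E_0^D$-class. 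For $U_0$ it is a support/parity argument relying on the injectivity of $R^D$: distinct authorized coding triples have distinct $z$-coordinates, so a sum of $m$ distinct such triples has $X_1$-weight exactly $m$; hence any element of the form $x+y+z$ (with $x \ne y$ in $X_0^D$ and $z \in X_1^D$) lying in $U_0^D$ must itself be a single generator, giving $R^D(x,y,z)$.

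Let $N$ be the $L$-reduct of the Fraisse limit $N^*$.  Property (1) is immediate since $X_0 \cup X_1$ is the basis of $N$, and property (2) follows by the same argument as Claim~1 in Proposition~\ref{herring}. The coherence lemmas above yield that $X_0, X_1, E_0, E_1$, and $R$ are all $L$-definable in $N$, via the explicit formulas $X_0 = \{x \in X^N : \exists y \in X^N (x \ne y \wedge x+y \in U_1^N)\}$ (and symmetrically for $X_1$ via $U_2^N$), $x E_0 y \iff x, y \in X_0 \wedge x+y \in U_1^N$, $z E_1 z' \iff z, z' \in X_1 \wedge z+z' \in U_2^N$, and $R(x,y,z) \iff x, y \in X_0 \wedge z \in X_1 \wedge x \ne y \wedge x+y+z \in U_0^N$. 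Consequently all five are $Aut(N)$-invariant, and the bijection $k^*:[X_0]^2 \to X_1/E_1$ specified by $k^*(\{x,y\}) = z/E_1 \iff R(x,y,z)$ is $Aut(N)$-invariant as well, yielding (4).

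Property (3) is established by adapting the back-and-forth of Claim~2 in Proposition~\ref{herring}: for each $h \in Sym(X_0/E_0)$, let $\F_h$ be the family of partial $L^+$-isomorphisms $f: A \to B$ between finite $\mathcal{K}$-substructures of $N^*$ satisfying $f(x)/E_0 = h(x/E_0)$ for every $x \in X_0^A$. Given $f$ and $A' \supseteq A$ in $\mathcal{K}$, one takes an abstract isomorphism $g: A' \to C$ with $C \cap N^* = B$, introduces a finite ``controller'' $F \subseteq N^*$ containing $B$ together with representatives $y_x \in h(x/E_0)$ of the $h$-images of the new $E_0$-classes of $A'$, amalgamates $F$ and $C$ over $B$ in $\mathcal{K}$ while declaring $g(x) E_0^D y_x$, and embeds the resulting amalgam into $N^*$ by $\mathcal{K}$-homogeneity to obtain $f'$. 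Each such lift is an $L^+$-automorphism of $N^*$ and hence an $L$-automorphism of $N$. The main obstacle throughout is keeping the coherence of $R$ and $U_0$ intact during amalgamation; this is precisely what the support/injectivity analysis above ensures, and it plays the same role here that the ``very strong'' disjoint amalgamation of $K$ played for the plain equivalence $E$ in Proposition~\ref{herring}.
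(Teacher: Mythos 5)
Your overall architecture is plausible and the coherence/definability calculations you sketch for $U_1$, $U_2$, $U_0$ are essentially correct (the edge-space argument for $U_1$, and the ``distinct $z$-coordinates'' support argument for $U_0$, both check out). However, there is a genuine gap in the amalgamation step: the class $\mathcal{K}$ as you have defined it \emph{fails} the amalgamation property, so the Fraisse limit $N^*$ you want to build need not exist.

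Here is a concrete obstruction. Let $A = \{0,z\}$ with $X_1^A=\{z\}$, $X_0^A=\emptyset$, $R^A=\emptyset$, all coherence subgroups trivial. Since you only require $R$ to be a \emph{partial} injection, $A\in\mathcal{K}$ even though $z/E_1$ has no $R$-preimage. Now let $B$ extend $A$ by two fresh elements $x_B,y_B\in X_0^B$ with $R^B(x_B,y_B,z)$ and (say) $E_0^B(x_B,y_B)$; and let $C$ extend $A$ by fresh $x_C,y_C\in X_0^C$ with $R^C(x_C,y_C,z)$ and $\neg E_0^C(x_C,y_C)$. Both $B$ and $C$ lie in $\mathcal{K}$ and contain $A$ as an $L^+$-substructure (one checks $U_n^B\cap A = U_n^A$ for every coherence predicate). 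In any amalgam $D$ extending $B$ and $C$ over $A$, both $\{x_B,y_B\}$ and $\{x_C,y_C\}$ map under $R^D$ to the $E_1$-class of the \emph{same} element $z\in A$, violating injectivity of $R^D$; and identifying the two pairs is impossible because $B$ and $C$ disagree about $E_0$. So there is no amalgam in $\mathcal{K}$, disjoint or otherwise. The procedure you describe (transitive closure of the $E_i$'s, fresh $z_{x,y}$'s for cross pairs) does not, and cannot, avoid this: it would produce a $D$ with two preimages of $z/E_1$, i.e.\ $D\notin\mathcal{K}$. The same problem resurfaces in your back-and-forth step (3), where the ``controller'' $F$ and the abstract extension $C$ can disagree over $B$ in exactly this way. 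Repairing this seems to require substantially strengthening $\mathcal{K}$ --- for instance demanding that $R^A$ be a genuine \emph{bijection} $[X_0^A]^2\to X_1^A/E_1^A$ at every finite level --- at which point both the amalgamation argument and the verification that substructures respect the coherence conditions become considerably more delicate, and none of that work appears in the proposal.

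For comparison, the paper sidesteps this entirely: it does \emph{not} attempt a second Fraisse construction. Instead it takes $N := M\oplus M$ for the $M$ already obtained in Proposition~\ref{herring}, names the two summands $V_0,V_1$ and their subgroups, and then adds just two explicit predicates $W_0:=\<T\>$ and $W_1:=\<Q\>$ built from a hand-chosen bijection $k:S_0\to X_1/E_1$. Invariance of $E_0,E_1$ and $k^*$ and the lifting property are then proved by direct linear-algebra calculations (Claims~1 and~2 of Proposition~\ref{lift}), with no amalgamation to check. You should either follow that route, or, if you want to keep the Fraisse construction, precisely state the extra closure conditions on $\mathcal{K}$ that block the obstruction above and re-verify amalgamation under them.
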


\begin{proof} As noted in Remark~\ref{shuffle},  when describing a tagged abelian group, the ordering of the $U_n$'s is irrelevant.  Hence,  
for clarity we can use other names for unary predicate symbols (so long as there
are only countably many predicates named).  Accordingly, our $N$ should be thought of as a direct sum $M\oplus M$, with $M$ from Proposition~\ref{herring}, endowed with two additional predicate symbols
$W_0,W_1$ that will be defined momentarily.   More formally, take $N$ to have universe $\bigoplus_\omega\FF_2\oplus \bigoplus_\omega\FF_2$ and (not as part of the language) for $i\in\{0,1\}$,
 let $\pi_i:N\rightarrow M$ be the natural projection maps.  Let $V_0,V_1$ be unary predicate symbols from our language interpreted as $V_0^N=M\times\{0\}$ and $V_1^N=\{0\}\times M$.
 As well, for $i\in\{0,1\}$ we have infinitely many unary predicates $V_{i,n}$ interpreted as subspaces of $V_i$ corresponding to the subspaces $U_n\subseteq M$.  
 That is, up to reindexing of the predicates, each of $(V_0,V_{0,n})_{n\in\omega}$ and $(V_1,V_{1,n})_{n\in\omega}$ are isomorphic as tagged abelian groups to $(M,U_n)_{n\in\omega}$.
 For $i\in\{0,1\}$, let $X_i:=V_i\setminus \bigcup\{V_{i,n}:n\in\omega\}$ and let $E_i\subseteq X_i$ be the corresponding equivalence relation given by Proposition~\ref{herring}.
 It follows from the properties of $(M,U_n)_{n\in\omega}$ that for $i=0,1$, $X_i$ is a basis for $V_i$ and is invariant under any $\sigma\in Aut(N)$.  Moreover,
 any permutation $h_i\in Sym(X_i/E_i)$ lifts to an automorphism of $(V_i,V_{i,n})_{n\in\omega}$.
 
 Up to here, all we have defined is a `doubling' of $M$.  We now give interpretations to the two new predicates $W_0,W_1$ to make $E_0,E_1$ invariant and to define $k^*$ as in the statement.
 First, let $S_0:=\{x+y:\{x,y\}\in [X_0]^2\}\subseteq V_0$.  Since $X_0\subseteq V_0$ is linearly independent, the map $\{x,y\}\mapsto x+y$ is a bijection $j:[X_0]^2\rightarrow S_0$.
 Next, as $S_0$ and $X_1/E_1$ are both countably infinite sets, fix a bijection $k:S_0\rightarrow X_1/E_1$ and let $k^*:[X_0]^2\rightarrow X_1/E_1$
 be the bijection $k\circ j$.  Note that the sets $\{k(s):s\in S_0\}$ describe a partition of $X_1$.
 Let 
 $$T:=\{s+z:s\in S_0, z\in X_1, z/E_1=k(s)\}$$
 and let $W_0^N:=\<T\>$, the $\FF_2$-subspace spanned by $T$.
 Additionally, let
 $$Q:=\{z\in X_1: z/E_1\in k(x+y)\  \hbox{for some $\{x,y\}\in [X_0]^2$ with $E_0(x,y)\}$}$$
 and let $W_1^N:=\<Q\>$, which is an $\FF_2$-subspace of $V_1$.
 
 \medskip
 \noindent{\bf Claim 1.}  For every $z\in X_1$, there is exactly one $w\in W_0^N$ such that $\pi_1(w)=z$.  In fact, any such $w$ is an element of  $T$.
 \medskip
 
 \begin{proof}  First, note that  since $\{k(s):s\in S_0\}$ partitions $X_1$,
 for every $z\in X_1$ there is precisely one $s\in S_0$ so that $s+z\in T$.  
 Now fix any $z\in X_1$.  Choosing $s\in S_0$ as in the previous sentence gives $s+z\in T\subseteq W_0$.
 We show that $s+z$ is the only $w\in W_0$ with $\pi_1(w)=z$.
 To see this, choose any $w=\sum_{i<n} s_i+z_i$ with each $s_i+z_i\in T$ and $\pi_1(w)=z$.  We may assume that $\sum_{i<n} s_i+z_i$ is `reduced' in the sense that
 $(s_i+z_i)+ (s_j+z_j)\neq (0,0)$ for all distinct $i,j<n$.  Under this assumption, we claim that $n=1$, i.e., $w=s+z$ as described above.
 To verify that $n=1$, 
 note that $\{z_i:i<n\}$ are distinct.  Indeed, if $z_i=z_j$, then by the uniqueness described in the first sentence of the proof
 we would have $s_i+z_i=s_j+z_j$.  But then, as $N$ has exponent 2, $(s_i+z_i)+(s_j+z_j)=(0,0)$ contradicting that $\sum_{i<n} s_i+z_i$ is reduced.
 As $\{s_i:i<n\}\subseteq V_0$, while $\{z_i:i<n\}\subseteq V_1$, $N$ being a direct sum and $\sum_{i<n} s_i+z_i=s+z$  implies
 $\sum_{i<n} z_i=z$.  Coupling this with the  fact that
 $\{z_i:i<n\}$ are distinct, the linear independence of $X_1$ implies that
 $n=1$ and $z_0=z$.
 \qed
 \end{proof}

%
%
%
%
 
 \medskip
 With this in hand, we now verify that every $h\in Sym(X_0/E_0)$ lifts to an automorphism of $N$.  Fix such an $h$ and since $(M,U_n)_{n\in\omega}$ lifts,
 choose $\sigma_0\in Aut(V_0,V_{0,n})_{n\in\omega}$ such that for all $x\in V_0$ $\sigma_1(x)/E_0=h(x/E_0)$.
 Also, as $\sigma_0$ permutes $X_0$, it permutes $S_0$.  Since $k:S_0\rightarrow X_1/E_1$ is a bijection, we get an induced $h_1\in Sym(X_1/E_1)$ defined as:
 
 $$h_1(k(s))=k(\sigma_0(s))$$
 Using the lifting property of $h_1\in Sym(X_1/E_1)$, choose $\sigma_1\in Aut(V_1,V_{1,n})_{n\in\omega}$ satisfying: 
 
 \begin{quotation}  \noindent For all $z\in X_1$, if $z/E_1=k(s)$, then
 $\sigma_1(z)/E_1=h_1(z/E_1)=k(\sigma_0(s)).$
 \end{quotation}
 Putting these together, as $N=V_0\oplus V_1$, let $\sigma:N\rightarrow N$ be defined as $\sigma(x+z)=\sigma_0(x)+\sigma_1(z)$.
 It is evident that $\sigma$ is a bijective homomorphism fixing $V_i$ and $V_{i,n}$ setwise for each $i\in\{0,1\}$ and all $n\in\omega$.  
 
 We show that this $\sigma$ also fixes $W_0^N$ and $W_1^N$ setwise.  Since $W_0^N=\<T\>$ and $W_1^N=\<Q\>$, it suffices to show that $\sigma$ fixes both $T$ and $Q$ setwise.
 To verify these, choose $(s,z)\in T$.  Since $\sigma(s,z)=(\sigma_0(s),\sigma_1(z))$, the equation $\sigma_1(z)/E_1=k(\sigma_0(s))$ implies that $\sigma(s,z)\in T$, and the reverse direction is symmetric.
 As for $Q$, choose $z\in Q$ and choose $\{x,y\}\in [X_0]^2\cap E_0$ such that $z/E_1=k(x+y)$.  As $\sigma_0$ is a lifting, it preserves $E_0$, hence $E_0(\sigma_0(x),\sigma_0(y))$.
 By our choice of $\sigma_1$. we have $\sigma_1(z)/E_1=k(\sigma_0(x)+\sigma_0(y))$.  Thus, $\{\sigma_0(x),\sigma_0(y)\}\in [X_0]^2\cap E_0$ witness that $\sigma_1(z)\in Q$.  Again, the reverse direction is
 symmetric.    Thus, every $h\in Sym(X_0/E_0)$ lifts to an automorphism of $N$.
 
 The remaining clauses of  Proposition~\ref{lift} are collected in the following claim.
 
 \medskip
 \noindent{\bf Claim 2.}  Suppose $\sigma\in Aut(N)$,  $\{x,y\}\in [X_0]^2$ and $z\in X_1$ with $z/E_1=k^*(\{x,y\})$.  Then:
 \begin{enumerate}[label=(\roman*)]
 \item  $\sigma(z)/E_1=k^*(\{\sigma(x),\sigma(y)\})$;
 \item  $\sigma$ preserves $E_1\subseteq X_1^2$ setwise;
 \item  $W_1^N\cap X_1=Q$, hence $\sigma$ fixes $Q$ setwise; and
 \item $\sigma$ preserves $E_0\subseteq X_0^2$ setwise.
 \end{enumerate}
 
\begin{proof}  (i)  Let $s:=x+y\in S_0$.
 Since $z/E_1=k(s)$, $s+z\in T\subseteq W_0^N$.  As $\sigma\in Aut(N)$, this implies $\sigma(s+z)\in W_0^N$.  By the uniqueness given by Claim 1, $\sigma(z)/E_1=k(\sigma(s))$, but
 $k(\sigma(s))=k^*(\{\sigma(x),\sigma(y)\}$).
 
 (ii)  Suppose $z,z'\in X_1$ and $E_1(z,z')$.  Choose $s\in S_0$ such that $z/E_1=k(s)$ and let $s'=\sigma(s)$.  By (i) twice we have  $\sigma(z)/E_1=k(s')=\sigma(z')/E_1$, hence
 $E_1(\sigma(z),\sigma(z'))$.
 
 (iii)  $Q\subseteq W_1^N\cap X_1$ is obvious.  For the converse, since $Q\subseteq X_1$ and $X_1$ is linearly independent, $Span(Q)\cap X_1=Q$.  As $W_1^N=Span(Q)$, we are finished.
As  both $X_1$ and $W_1^N$ are invariant under any $\sigma\in Aut(N)$, so is $Q$.

(iv)  Unpacking the definitions, note that for $\{x,y\}\in [X_0]^2$, $E_0(x,y)$ holds if and only if $k(x+y)\cap Q\neq\emptyset$ if and only if $k(x+y)\subseteq Q$.
Using this and (iii), for $\{x,y\}\in [X_0]^2$, 
$$E_0(x,y)\quad\Leftrightarrow\quad k(x+y)\subseteq Q\quad\Leftrightarrow\quad k(\sigma(x)+\sigma(y))\subseteq Q\quad\Leftrightarrow \quad E_0(\sigma(x),\sigma(y))$$
\qed
\end{proof}

\end{proof}
 
%
%

\noindent {\bf Proof of Theorem~\ref{tagBC}}

\medskip

Fix any tagged abelian group $N$ in the language $L=\{+,0\}\cup\{U_n:n\in\omega\}$ satisfying the conditions of Proposition~\ref{lift}.  
 Let $L_+:=L\cup\{U_+\}$.  As noted in Remark~\ref{shuffle}, any expansion $(N,U_+)$ of $N$ by interpreting $U_+$ as a subgroup of $N$ can be construed as a tagged abelian group.
   Fix, for once and for all, a bijection: $\omega\leftrightarrow X_0/E_0$.  
   We define a Borel embedding from (irreflexive, symmetric) graphs $G=(X_0/E_0,R^G)$ with universe $X_0/E_0$ into
 expansions $N(G)=(N,U_+(G))$ of $N$ with $U_+(G)$ interpreted as a subgroup of $N$.
 Given a graph $G=(X_0/E_0,R^G)$, let
 $$Z_G:=\{z\in X_1:z/E_1=k^*(\{x,y\})\ \hbox{for some $\{x,y\}\in [X_0]^2$ with $G\models R(x/E_0,y/E_0)$}\}$$
 let $U_+(G):=\<Z_G\>$ (a subgroup of $N_0$) and let $N(G):=(N,U_+(G))$.  This mapping $G\mapsto N(G)$ is clearly Borel and $N(G)$ is a tagged abelian group, 
 so we must show it preserves isomorphism in both directions.
 
 First, suppose $h:G\rightarrow H$ is a graph isomorphism.  Then $h\in Sym(X_0/E_0)$, so choose a lifting $\sigma\in Aut_L(N)$.  We argue that $\sigma:N(G)\rightarrow N(H)$
 is an $L_{+}$-isomorphism.  For this, it suffices to show that $\sigma[Z_G]=Z_{H}$ (setwise).  Choose $z\in Z_G$.  Since $z\in X_1$, choose $\{x,y\}\in [X_0]^2$ with $z/E_1=k^*(\{x,y\})$.
 Since $\sigma$ is a lifting of $h$ we have
 $$\sigma(x)/E_0=h(x/E_0)\quad\hbox{and}\quad\sigma(y)/E_0=h(y/E_0)$$
 By Proposition~\ref{lift}(3), $\sigma(z)/E_1=k^*(\{\sigma(x),\sigma(y)\})$.
 Since $z\in Z_G$, $G\models R(x/E_0,y/E_0)$, so as $h$ is a graph isomorphism $H\models R(h(x/E_0),h(y/E_0))$.  
 Combining these, we get that $\{\sigma(x),\sigma(y)\}$ witness that $\sigma(z)\in Z_{H}$.  That $\sigma(z)\in Z_{H}$ implies $z\in Z_G$ is symmetric.
 
Conversely,  suppose $f:N(G)\rightarrow N(H)$ is an $L_+$-isomorphism.  Then clearly $f\in Aut_{L}(N)$.  Additionally, we claim that $f[Z_G]=Z_{H}$ setwise.
To see this, choose $z\in Z_G$.  Since $f\in Aut_{L}(N)$, $f$ permutes $X_1$, hence $f(z)\in X_1$.  But also, since $f[U_+(G)]=U_+^{H}$, $f(z)\in Span(Z_{H})$.
Since $X_1$ is linearly independent, we conclude that $f(z)\in Z_{H}$.  The reverse direction is symmetric.

Since $f\in Aut_{L}(N)$, both $X_0$ and $E_0$ are $f$-invariant, so $f$ induces a permutation $h\in Sym(X_0/E_0)$.  We argue that $h:G\rightarrow H$ is a graph isomorphism.
To see this, choose $\{x,y\}\in [X_0]^2$ with $G\models R(x/E_0,y/E_0)$.
Choose any $z\in X_1$ such that $z/E_1=k^*(\{x,y\})$.  Thus $z\in Z_G$ by definition, so by the note above, $f(z)\in Z_{H}$.  
 By Proposition~\ref{lift}(3), $f(z)/E_1=k^*(\{f(x),f(y)\})$.  Since $k^*$ is a bijection,  $\{f(x),f(y)\}$ is the only witness to $f(z)\in Z_{H}$,
hence $H\models R(f(x)/E_0,f(y)/E_0)$ by the definition of $Z_H$.

\endproof

%
%
%
%

\end{document}